\newcommand{\Q}{\mathbb{Q}}
\newcommand{\Z}{\mathbb{Z}}
\newtheorem{thm}{Theorem}
\newtheorem{lemma}[thm]{Lemma}
\DeclareMathOperator{\rad}{rad}
\begin{document}

\title{On the Diophantine equation 
$X^{2N} + 2^{2\alpha}5^{2\beta}p^{2\gamma} = Z^5$}
\author{Eva G. Goedhart and Helen G. Grundman}

\date{}

\maketitle

\begin{abstract}
We prove that for each odd prime $p$, positive integer
$\alpha$, and non-negative integers $\beta$ and $\gamma$,
the Diophantine equation
$X^{2N} + 2^{2\alpha}5^{2\beta}p^{2\gamma} = Z^5$
has no solution with $X$, $Z$, $N\in\Z^+$, $N > 1$,
and $\gcd(X,Z) = 1$.
\end{abstract}

\section{Introduction}

In 2001, Arif and Abu Muriefah~\cite{ArMu01} (and in 2002, independently,
Le~\cite{Le02}) proved that there is
no integer solution to the equation 
$x^2 + 2^{2m} = y^n$,
with $m \geq 3$, $n \geq 3$, and
$\gcd(x,y) = 1$.  
Since that time, there has been
great interest in studying many variations of this equation.  Of 
particular interest here are those in which
the $2^{2m}$ is replaced by a power of a different prime 
or with the product of a small number of primes raised to powers. 
We consider an equation of the latter form, in which
we also replace the variable exponent in the final term with the 
constant 5 and allow for any even exponent greater than 2
on the first term.  Our equation is actually inspired by 
the work of Bennett~\cite{Be06} in which he considers
the equation $x^{2n} + y^{2n} = z^5$, with $n > 1$.  
We do not require that the middle term be raised to the power $2n$,
only that it be an even square with few prime factors.

\begin{thm}
\label{mainthm}
Let $p$ be an odd prime, $\alpha$ a positive integer,
and $\beta$ and $\gamma$ non-negative integers.
The equation
\begin{equation}
\label{maineq}
X^{2N} + 2^{2\alpha}5^{2\beta}p^{2\gamma} = Z^5
\end{equation}
has no solution with $X$, $Z$, $N\in\Z^+$, $N > 1$,
and $\gcd(X,Z) = 1$.
\end{thm}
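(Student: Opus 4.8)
The plan is to convert \eqref{maineq}, by a factorization in $\Z[i]$, into two conditions on a pair of coprime integers $a$ and $b$: one forcing a binary quartic form in $a,b$ to be (up to sign and a small power of $5$) a perfect $N$th power, the other forcing a second quartic form to be composed only of the primes $2$, $5$, and $p$. Playing these constraints against each other should reduce the problem to a short, explicit list of sub-cases, each killed by a Thue equation, a congruence, or a descent in a quadratic field.

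First I would put $x=X^N$ and $c=2^\alpha5^\beta p^\gamma$, so that \eqref{maineq} reads $x^2+c^2=Z^5$. Since $\gcd(X,Z)=1$, a prime dividing two of $x,c,Z$ divides all three, hence both $X$ and $Z$; so $x,c,Z$ are pairwise coprime, and as $\alpha\ge1$ makes $c$ even, $x$ and $Z$ are odd. In $\Z[i]$---a PID whose four units are all fifth powers---$x+ci$ and $x-ci$ are then coprime with product $Z^5$, so $x+ci=(a+bi)^5$ for coprime $a,b\in\Z$ with $a^2+b^2=Z$; comparing parities forces $a$ odd, $b$ even, and $b\ne0$ (else $c=0$). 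Expanding the fifth power,
\[
  x=a\bigl(a^4-10a^2b^2+5b^4\bigr),\qquad
  c=b\bigl(5a^4-10a^2b^2+b^4\bigr).
\]

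Write $h=a^4-10a^2b^2+5b^4$ and $g=5a^4-10a^2b^2+b^4$; both are odd, the greatest common divisors $\gcd(a,h)$ and $\gcd(b,g)$ equal $\gcd(a,5)$ and $\gcd(b,5)$ and so lie in $\{1,5\}$, and $v_5(h),v_5(g)\le1$. From $bg=2^\alpha5^\beta p^\gamma$ together with $g\equiv b^4\pmod 5$ I would deduce $v_2(b)=\alpha$ and that only a handful of distributions of the primes $5$ and $p$ between $b$ and $g$ are possible; in each, $g\in\{\pm1,\pm5,\pm p^j,\pm5p^j\}$ while $b$ is $\pm2^\alpha$ times a bounded power of $5$ times a power of $p$. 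Dually, since $x=ah$ is a perfect $N$th power with $N\ge2$ and $\gcd(a,h)\mid5$, each of $a$ and $h$ is a perfect $N$th power up to sign and at most one factor of $5$; more precisely $h\in\{\pm B^N,\pm5B^N\}$ for some integer $B$. The cases forcing $g=\pm1$---and, via the identity $h(a,b)=g(b,a)$, those forcing $h=\pm1$---are then dispatched by elementary means: from $5a^4-10a^2b^2+b^4=(b^2-5a^2)^2-20a^4$ the value $-1$ is impossible modulo $4$, while $+1$ is killed either by a congruence modulo $16$ or by the explicit (finite) solution set of the resulting quartic Thue equation, none of whose solutions has $b$ even and nonzero.

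In the remaining cases $h$ equals $\pm B^N$ or $\pm5B^N$, with $g\in\{\pm p^j,\pm5,\pm5p^j\}$ and $a,b$ heavily restricted. Here I would write $h=(a^2-5b^2)^2-5(2b^2)^2$ and pass to $\Q(\sqrt5)$: when $5\nmid a$ the conjugate factors $(a^2-5b^2)\pm2b^2\sqrt5$ are coprime in the ring of integers, so each is a unit times an $N$th power, and---the fundamental unit having norm $-1$---pairing a factor with its conjugate yields (up to sign) the $N$th term $\frac{\mu^N-\bar\mu^N}{\mu-\bar\mu}$ of a Lucas sequence, of absolute value at most $2b^2$ and with all prime divisors in $\{2,5,p\}$. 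The Bilu--Hanrot--Voutier theorem on primitive divisors then forces any sufficiently large $N$ to contribute a primitive prime divisor to this term; such a divisor is a prime in $\{2,5,p\}$ congruent to $\pm1$ modulo $N$, hence must be $p$ with $N\le p+1$, and the complementary bound that this term grows geometrically in $N$ yet stays below $2b^2$ closes the remaining window. What is left---a finite set of small exponents $N$, the sub-case $5\mid a$, the degenerate unit classes, and the Thue--Mahler equations coming from $g$---is then finished off by congruences, the Thue bounds above, and standard effective results on superelliptic equations. The main obstacle is precisely this $N$-uniform endgame: the case $N=2$, where $x=X^N$ is only known to be a perfect square, needs its own, essentially computational, treatment (integral points on a curve of positive genus, or a small Thue--Mahler equation), as do the first few odd $N$. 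A persistent secondary nuisance is the prime $5$: because $\gcd(a,h)$ and $\gcd(b,g)$ may each equal $5$, a stray factor of $5$ has to be tracked through every branch of the argument.
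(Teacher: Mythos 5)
Your opening is exactly the paper's: setting $x=X^N$, factoring $x+ci$ in $\Z[i]$, and arriving at $x=a(a^4-10a^2b^2+5b^4)$ and $2^\alpha 5^\beta p^\gamma=b(5a^4-10a^2b^2+b^4)$ is precisely the descent the paper performs (with $u=a$, $v=b$), and your bookkeeping of the stray factor of $5$ and the elimination of $g=\pm1$ via $g\equiv 5\pmod 8$ also match. From there the arguments diverge: the paper converts $h=\pm B^n$ (resp.\ $5B^n$) into the ternary equations $B^n+2^{4\alpha+2}5^{4k+1}Y^n=Z^2$ and $B^n+2^{4\alpha+2}Y^n=5Z^2$ and invokes Bennett--Skinner's modular-form theorems (plus a newform-level argument for $n=7$), whereas you propose a descent in $\Q(\sqrt{5})$ followed by Bilu--Hanrot--Voutier.

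That endgame has two genuine gaps. First, $\Q(\sqrt{5})$ is a \emph{real} quadratic field with infinite unit group: from $\bigl((a^2-5b^2)+2b^2\sqrt{5}\bigr)\bigl((a^2-5b^2)-2b^2\sqrt{5}\bigr)=\pm B^N$ you only get that each factor is $\pm\eta^{j}\mu^N$ with $\eta$ the fundamental unit and $j$ determined modulo $N$; only $j=0$ yields a genuine Lucas term $\frac{\mu^N-\bar\mu^N}{\mu-\bar\mu}$, and BHV says nothing about $\eta^{j}\mu^N-\bar\eta^{\,j}\bar\mu^N$ for $j\neq 0$. These are not ``degenerate'' classes to mop up afterward; they are the generic case, and this is exactly why the Lucas-sequence method, which works for $x^2+C=y^n$ through imaginary quadratic fields (finite unit group), does not transfer here --- it is presumably why the paper (following Bennett) uses Frey curves instead. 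Second, you allow $b$ to carry a power of $p$, and in that branch your contradiction evaporates: the primitive divisor may be $p$ itself, ``$N\leq p+1$'' is vacuous for large $p$, and the size bound ``the term stays below $2b^2$'' only gives $N\ll \alpha+\beta+\gamma\log p$, not an absolute bound. The paper excludes $p\mid v$ only by proving its Lemma~2 (that $v^4-10u^2v^2+5u^4\neq 5$), which takes a nontrivial argument ending in an appeal to Luca--Togb\'e; your sketch has no substitute for it. Finally, the small exponents are heavier than you allow: $N=2$ and $N=3$ are the theorems of Bruin (Chabauty) and Bennett--Chen (multi-Frey $\Q$-curves for $a^2+b^6=c^n$), not ``a small Thue--Mahler equation.''
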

Note that the condition $N > 1$ is necessary for the theorem to hold, 
since, for example,
$41^2 + 2^2\cdot 19^2 = 5^5$.  

A number of special cases of Theorem~\ref{mainthm} are already known.
For $N = 2$, Bruin~\cite[Theorem 1.1]{Br03} proved that 
equation~(\ref{maineq}) has no positive integer solutions and for $N = 3$, 
Bennett and Chen~\cite[Theorem 1]{BeCh12} proved likewise.
The theorem is also known to be true for $\beta = 
\gamma = 0$~\cite{ArMu01,Le02},
$\beta \neq 0$ and $\gamma = 0$~\cite{LuTo08},
$p = 3$ and $\beta = 0$~\cite{Lu02}, 
$p = 11$ and $\beta = 0$~\cite{CaDe10}, 
$p = 13$~\cite{GoLuTo08},
$p = 17$~\cite{GoMaTo12}, 
$p = 19$ and $\beta = 0$~\cite{SoUlZh12}, 
$2^{2\alpha}5^{2\beta}p^{2\gamma} 
\leq 100$ (see, for example,~\cite{BuMiSi06}), 
and $N$ divisible by a prime greater than 17 that is congruent
to 1 modulo~4~\cite{Ch10}.

We note further that, 
since equation~(\ref{maineq}) is of the form $X^{2N} + C^2 = Z^5$,
a result of Darmon and Granville~\cite[Theorem 2]{DaGr95} guarantees that,
for any given value of $N$, there are at
most finitely many integer solutions with $\gcd(X,Z) = 1$.

In the following section, we first present and prove a lemma 
important to the proof of
Theorem~\ref{mainthm}.  We then state a simplified version of
a result due to Bennett and Skinner~\cite{BeSk04}, specific
to our needs.  In Section~\ref{proof}, we prove Theorem~\ref{mainthm},
following the ideas and methods found in~\cite{Be06}.

\section{Preliminaries}

We begin with two lemmas.

\begin{lemma}\label{5}
Let $p$, $\alpha$, $\beta$, and $\gamma$ be as in Theorem~\ref{mainthm},
with $p\neq 5$.
Let $u$ and $v\in \Z$ be coprime, with $v$ even, such that
\begin{equation}
\label{lv()}
2^\alpha 5^{\beta}p^\gamma = v(v^4 - 10u^2v^2 + 5u^4).
\end{equation}
Then $v^4 - 10u^2v^2 + 5u^4 \neq 5$.
\end{lemma}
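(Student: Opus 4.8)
The plan is to show that $v^4 - 10u^2v^2 + 5u^4 = 5$ is already impossible; the hypothesis on $2^\alpha 5^\beta p^\gamma$ will be used only to exclude the degenerate case $v = 0$. Assume $v^4 - 10u^2v^2 + 5u^4 = 5$. Completing the square gives
\[
(v^2 - 5u^2)^2 = 25u^4 - 5u^4 + 5 = 5(4u^4 + 1),
\]
so $5 \mid v^2 - 5u^2$, hence $5 \mid v$; writing $v^2 - 5u^2 = 5w$ with $w\in\Z$ yields $25w^2 = 5(4u^4+1)$, that is,
\[
(2u^2)^2 - 5w^2 = -1 .
\]
It therefore suffices to prove this Pell-type equation forces $u^2 = 1$, for then the original equation reads $v^4 - 10v^2 = 0$, so $v^2\in\{0,10\}$, and since $10$ is not a square we get $v=0$, contradicting $5v = 2^\alpha 5^\beta p^\gamma \neq 0$.

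Next I would use that the positive integer solutions of $X^2 - 5Y^2 = -1$ are exactly those with $X + Y\sqrt5 = (2+\sqrt5)^{2k+1}$, $k\in\Znn$ (the fundamental solution being $(2,1)$). Put $\varphi = (1+\sqrt5)/2$ and $\bar\varphi = (1-\sqrt5)/2$, so that $2+\sqrt5 = \varphi^3$, $2-\sqrt5 = \bar\varphi^3$, and $\bar\varphi = -\varphi^{-1}$. Conjugating and adding, and using that $6k+3$ is odd,
\[
2X_k = (2+\sqrt5)^{2k+1} + (2-\sqrt5)^{2k+1} = \varphi^{6k+3} - \varphi^{-(6k+3)} = L_{6k+3},
\]
where $L_n = \varphi^n + \bar\varphi^n$ is the $n$-th Lucas number. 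Since $X_k = 2u^2$, this gives $L_{6k+3} = 4u^2$.

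Now the triplication identity $L_{3m} = L_m^3 - 3(-1)^m L_m$ with $m = 2k+1$ (odd) gives $L_{6k+3} = \ell(\ell^2+3)$ where $\ell := L_{2k+1}$, so $4u^2 = \ell(\ell^2+3)$; moreover $\gcd(\ell,\ell^2+3) = \gcd(\ell,3) = 1$, since $3\mid L_n$ only for $n\equiv2\pmod4$. If $\ell$ is odd then $\ell^2+3\equiv4\pmod8$, so $u^2 = \ell(\ell^2+3)/4$ with coprime factors, forcing $\ell$ to be a perfect square; by Cohn's theorem the only square Lucas numbers are $L_1=1$ and $L_3=4$, so $2k+1\in\{1,3\}$, and $2k+1=3$ is impossible as it makes $\ell=4$ even — hence $k=0$. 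If $\ell$ is even then $2k+1$ is odd and divisible by $3$, so $2k+1\equiv3\pmod6$, giving $\ell\equiv4\pmod8$ and $\ell^2+3$ odd; then $u^2 = (\ell/4)(\ell^2+3)$ with coprime factors, so $\ell^2+3$ is a perfect square, impossible since $t^2-\ell^2=3$ has no solution with $\ell$ even. Either way $k=0$, so $u^2=1$, and we finish as above.

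The step I expect to be the real obstacle is the appeal to Cohn's theorem: the condition that $\ell = L_{2k+1}$ be a perfect square resists any finite congruence attack — iterating quadratic-residue restrictions modulo $4,3,7,9,\dots$ only pins $k$ down modulo ever higher powers of $2$ without ever closing off — so the classification of the square Lucas numbers is genuinely needed here. The remaining ingredients (completing the square, the shape of the Pell solutions, the triplication identity, and the divisibility bookkeeping) are routine.
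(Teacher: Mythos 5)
Your proof is correct, but it follows a genuinely different route from the paper's. The paper leans heavily on the hypothesis~(\ref{lv()}): from $v^4-10u^2v^2+5u^4=5$ it reads off $v=2^\alpha 5^{\beta-1}p^\gamma$, manipulates the two equations into $(u^2-v^2+1)(u^2-v^2-1)=2^{4\alpha+2}5^{4\beta-5}p^{4\gamma}$, and after splitting the factors and reducing modulo $5$ and $3$ arrives at $3^{4\gamma}-2^{4\alpha}5^{4\beta-5}=1$, which is killed by the Luca--Togb\'e theorem on $x^2+2^a5^b=y^n$. You instead prove the stronger statement that $v^4-10u^2v^2+5u^4=5$ has no integer solutions with $v\neq 0$ at all, using the shape of the right-hand side only to rule out $v=0$: completing the square gives $(2u^2)^2-5w^2=-1$, the odd powers of $2+\sqrt5=\varphi^3$ give $L_{6k+3}=4u^2$, and the triplication identity plus the coprimality and parity bookkeeping (all of which I checked and which are correct, including the claims that $3\mid L_n$ iff $n\equiv 2\pmod 4$ and that $L_n\equiv 4\pmod 8$ for $n\equiv 3\pmod 6$) reduce everything to $L_{2k+1}$ being a perfect square, settled by Cohn's 1964 classification $L_1=1$, $L_3=4$. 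What each approach buys: yours is more self-contained in spirit and yields a cleaner, stronger conclusion about the Thue equation itself, at the cost of importing Cohn's theorem (a classical but nontrivial result that, as you rightly note, cannot be replaced by finitely many congruences); the paper's argument is less elegant but stays entirely within the circle of $x^2+2^a5^bp^c=y^n$ results it is already citing, which keeps the prerequisites uniform with the rest of the paper. Both external inputs are legitimate published theorems, so either proof is acceptable.
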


\begin{proof}
Suppose that $v^4 - 10u^2v^2 + 5u^4 = 5$.  Then,
by equation~(\ref{lv()}),
\[v = 2^\alpha 5^{\beta - 1}p^\gamma\] and $\beta \geq 2$.
Combining the two equations, we find that $5(u^2 - v^2)^2 - 5 = 4v^4 
= 2^{4\alpha + 2} 5^{4\beta - 4}p^{4\gamma}$,
and so $(u^2 - v^2 + 1)(u^2 - v^2 - 1) 
= 2^{4\alpha + 2} 5^{4\beta - 5}p^{4\gamma}$.
Since $\gcd(u^2 - v^2 + 1,u^2 - v^2 - 1) = 2$ and
$u^2 - v^2 + 1 \equiv 2\pmod 4$, 
\[u^2 - v^2 + 1 = 2\cdot 5^k p^\ell
\hspace{1pc} \mbox{and} \hspace{1pc}
u^2 - v^2 - 1 = 2^{4\alpha + 1} 5^{k^\prime} p^{\ell^\prime},\]
where $\{k,k^\prime\} = \{0,4\beta - 5\}$ and
$\{\ell,\ell^\prime\} = \{0,4\gamma\}$. 
Subtracting, then dividing by 2, we obtain
\begin{equation}\label{lk}
5^k p^\ell - 2^{4\alpha} 5^{k^\prime} p^{\ell^\prime} = 1.
\end{equation}
Now, $2^4 \equiv 1 \pmod 5$ and, since $p\neq 5$, 
$p^4 \equiv 1 \pmod 5$.  Hence, equation~(\ref{lk}) implies that
$k = 0$.  It follows that $\ell \neq 0$ and so we have
$p^{4\gamma} - 2^{4\alpha} 5^{4\beta - 5} = 1$ with $\gamma \neq 0$.
If $p \neq 3$, then reducing modulo~3 yields a contradiction.
Thus $p = 3$ and 
\begin{equation}\label{l=1}
3^{4\gamma} - 2^{4\alpha} 5^{4\beta - 5} = 1.
\end{equation}
But this provides a positive integer solution to the
equation $X^2 + 2^a\cdot 5^b = Y^N$ with
$\gcd(X,Y) = 1$, $4\mid N$, $a > 0$, and $b \geq 3$, 
contradicting~\cite[Theorem 1.1]{LuTo08}.

Therefore, $v^4 - 10u^2v^2 + 5u^4 \neq 5$.
\end{proof}

Following a ``modular approach" to solving Diophantine equations,
Bennett and Skinner~\cite{BeSk04} 
developed the main tools we use
in proving Theorem~\ref{mainthm}.  We give here a corollary of a
particular case of one of their results, based on the 
presentation given in~\cite[Theorem 15.8.3]{CohenII}.  As usual, for
$a\in \Q$, let $v_p(a)$ denote the $p$-valuation of $a$.

\begin{lemma}[Bennett-Skinner]\label{BeSk}
Let $x^7 + Cy^7 = z^2$ 
with $C$, $x$, $y$, $z\in \Z$, $xy\neq\pm 1$,
$x$, $Cy$, and $z$ nonzero and pairwise relatively prime, 
$z\equiv 1\pmod 4$,
$v_2(Cy^7)\geq 6$,
and for all primes $q$, 
$v_q(C) < 7$.
Then there exists a newform of level
\[
N_7 = \left\{\begin{array}{ll}
2\rad(C),  & \mbox{if\ } v_2(C) = 0,\\
\rad(C)/2,  & \mbox{if\ } v_2(C) = 6,\\
\rad(C), & \mbox{otherwise.}
\end{array}
\right.
\]
\end{lemma}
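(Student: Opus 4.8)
The plan is to read this statement off the general theorem of Bennett and Skinner on ternary equations of signature $(n,n,2)$, as packaged in~\cite[Theorem 15.8.3]{CohenII}, specialized to $n=7$ and to the coefficient pattern $1\cdot x^7 + C\cdot y^7 = z^2$; the real content is then checking the hypotheses and working out the level formula. First I would attach to a hypothetical solution the Frey--Hellegouarch curve
\[
E\colon\quad Y^2 = X^3 + 2zX^2 + Cy^7\,X,
\]
whose minimal discriminant is, up to the powers of $2$ introduced by passing to the minimal model, essentially $\Delta_E = 2^6\,C^2\,x^7 y^{14}$. The coprimality and parity assumptions are exactly what make Tate's algorithm behave: since $x$, $Cy$, $z$ are pairwise coprime and $x$, $z$ are odd, at every odd prime $q\nmid C$ the curve is semistable, with multiplicative reduction precisely when $q\mid xy$; at an odd prime $q\mid C$ one checks $q\nmid c_4$ (using $\gcd(z,C)=1$), so $E$ again has multiplicative reduction there; and at $q=2$ the reduction type is forced by $v_2(Cy^7)\geq 6$ together with $z\equiv 1\pmod 4$.

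Next I would bring in modularity and level-lowering. By the modularity theorem $E$ is modular; and provided $\overline{\rho}_{E,7}$ is irreducible --- this is where the hypothesis $xy\neq\pm1$ enters, serving to exclude the degenerate curves admitting a rational $7$-isogeny, with irreducibility then established exactly as in~\cite{BeSk04} (note that $X_0(7)$ has genus $0$, so this exclusion is genuinely needed) --- Ribet's level-lowering theorem produces a weight-$2$ newform whose mod-$7$ representation is isomorphic to $\overline{\rho}_{E,7}$ and whose level $N_7$ is obtained from the conductor of $E$ by striking out every prime of multiplicative reduction at which $7$ divides the valuation of the minimal discriminant.

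It remains to compute $N_7$. For an odd prime $q\mid xy$ with $q\nmid C$ we have $v_q(\Delta_E) = 7\bigl(v_q(x) + 2v_q(y)\bigr)\equiv 0\pmod 7$, so $q$ is removed, and likewise $q = 7$ is removed when $7\nmid C$. For an odd prime $q\mid C$ we have $v_q(\Delta_E)\equiv 2v_q(C)\not\equiv 0\pmod 7$, using $1\leq v_q(C)\leq 6$, which is precisely the hypothesis $v_q(C) < 7$; hence $q$ survives, contributing a single factor $q$ via multiplicative reduction, and so the odd part of $N_7$ equals the odd part of $\rad(C)$. The prime $2$ then produces the three cases: a short run of Tate's algorithm at $2$ --- using $v_2(\Delta_E) = 6 + 2v_2(C) + 14v_2(y)$, the constraint $v_2(C) + 7v_2(y)\geq 6$, and $z\equiv 1\pmod 4$ --- shows that $v_2(N_7) = 1$ when $v_2(C) = 0$, that $v_2(N_7) = 0$ when $v_2(C) = 6$, and that $v_2(N_7) = 1$ when $0 < v_2(C) < 6$; combining with the odd part yields the values $2\rad(C)$, $\rad(C)/2$, and $\rad(C)$, respectively.

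The step I expect to be the genuine obstacle is the analysis at $2$: pinning down the exact exponent of $2$ in the minimal discriminant and the precise Kodaira type there in each residue class of $v_2(C)$, together with the quadratic twist needed to normalize to $z\equiv 1\pmod 4$, is delicate, and it is exactly this $2$-adic bookkeeping that produces the three branches of the formula for $N_7$. The only other nontrivial input is the irreducibility of $\overline{\rho}_{E,7}$, for which I would simply quote the corresponding argument in~\cite{BeSk04}, since the lemma already builds in the hypothesis $xy\neq\pm1$ under which that argument applies.
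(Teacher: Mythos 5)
The paper offers no proof of this lemma at all: it is introduced as ``a corollary of a particular case'' of the Bennett--Skinner theorem in the form given in~\cite[Theorem 15.8.3]{CohenII}, so the paper's entire justification is the specialization you describe in your opening sentence ($n=7$, $A=1$, $B=C$), and that citation by itself is an acceptable proof here. Your reconstruction of the underlying machinery is the right strategy in outline, and your analysis at odd primes is correct: for odd $q\mid C$ one has $v_q(\Delta)\equiv 2v_q(C)\not\equiv 0\pmod 7$ because $1\leq v_q(C)<7$, so $q$ survives level-lowering, while odd $q\mid xy$ with $q\nmid C$ contributes $7\mid v_q(\Delta)$ and drops out; this correctly accounts for the odd part of $N_7$.

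The gap is at $2$, and it is not merely the deferral you acknowledge: the Frey curve you write down, $Y^2=X^3+2zX^2+Cy^7X$, does not yield the stated level. Its invariants satisfy $v_2(c_4)=v_2\bigl(16(4z^2-3Cy^7)\bigr)=6$ and $v_2(\Delta)=6+2v_2(Cy^7)\geq 18$, so after the one permitted rescaling the minimal model has $v_2(c_4)=2>0$ and $v_2(\Delta)=2v_2(Cy^7)-6>0$: \emph{additive} reduction at $2$. Level-lowering never removes an additive prime, so this curve would leave a factor $2^{\delta}$ with $\delta\geq 2$ in $N_7$, which is incompatible with all three branches of the formula ($v_2(N_7)\in\{0,1\}$). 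The curve Bennett--Skinner actually use in the regime $v_2(Cy^7)\geq 6$ is the quadratic twist of yours by $2$ (up to sign), realized by the integral model
\[
Y^2+XY=X^3+\tfrac{z-1}{4}X^2+\tfrac{Cy^7}{64}X,
\]
whose coefficients are integral precisely because $z\equiv 1\pmod 4$ and $v_2(Cy^7)\geq 6$ --- this, rather than a normalization of sign, is where those two hypotheses are consumed --- and whose discriminant is $2^{-12}(Cy^7)^2x^7$. That model has good reduction at $2$ when $v_2(Cy^7)=6$, and multiplicative reduction with $v_2(\Delta_{\min})=2v_2(C)+14v_2(y)-12\equiv 2v_2(C)+2\pmod 7$ otherwise, so $2$ disappears from the level-lowered conductor exactly when $v_2(C)=6$; that is the entire source of the three branches $2\rad(C)$, $\rad(C)/2$, $\rad(C)$. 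Either redo the $2$-adic analysis with this model, or do what the paper does and simply quote~\cite[Theorem 15.8.3]{CohenII}.
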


\section{Proof of Theorem~\ref{mainthm}}
\label{proof}

Let $p$, $\alpha$, $\beta$, and $\gamma$ be as in Theorem~\ref{mainthm}
and suppose that $(N,X,Z) = (n,x,z)$ is a solution to
equation~(\ref{maineq}) with
$n$, $x$, $z\in\Z^+$, $n > 1$, 
and $\gcd(x,z) = 1$. 
Note that, since $\alpha \geq 1$, $x$ and $z$ are both odd.

We assume without loss of generality that $p\neq 5$ and that
$n$ is prime.  As noted in the introduction, 
by~\cite{Br03}, $n \neq 2$, and 
by~\cite{BeCh12}, $n \neq 3$.

Suppose that $n = 5$.
By equation~(\ref{maineq}),
\begin{equation}\label{n=5main}
x^{10} + 2^{2\alpha}5^{2\beta}p^{2\gamma} = z^5, 
\end{equation}
and so
\begin{equation}\label{n=5}
2^{2\alpha}5^{2\beta}p^{2\gamma} = 
\left(z - x^2\right)\left(z^4 + z^3 x^2 + z^2 x^4 + z x^6 + x^8\right).
\end{equation}
Since $x$ and $z$ are odd, $z - x^2$ is even.
Note that, since $x \geq 1$ and $\alpha \geq 1$, 
$x^{10} + 2^{2\alpha}5^{2\beta}p^{2\gamma} \geq 5$, which
implies that $z > 1$, and, therefore,
$z^4 + z^3 x^2 + z^2 x^4 + z x^6 + x^8 \neq 1$ or 5.

If $\beta = 0$, then
$\gcd(z - x^2,z^4 + z^3 x^2 + z^2 x^4 + z x^6 + x^8) = 1$
and so
\begin{equation*}
z - x^2 = 2^{2\alpha}
\mbox{\ and\ }
z^4 + z^3 x^2 + z^2 x^4 + z x^6 + x^8 = p^{2\gamma}. 
\end{equation*}
 
If $\beta \neq 0$, then,
noting that $z - x^2 \equiv z^5 - x^{10} \equiv 0 \pmod 5$, 
we have $5\mid (z - x^2)$.  
So
$z^4 + z^3 x^2 + z^2 x^4 + z x^6 + x^8 \equiv 5 \pmod{25}$ and
$\gcd(z - x^2,z^4 + z^3 x^2 + z^2 x^4 + z x^6 + x^8) = 5$.
Hence, from equation~(\ref{n=5}), 
\begin{equation*}
z - x^2 = 2^{2\alpha}5^{2\beta - 1} \mbox{\ and\ }
z^4 + z^3 x^2 + z^2 x^4 + z x^6 + x^8 = 5p^{2\gamma}.
\end{equation*}

Thus, in either case,
we have $z = x^2 + 2^{2\alpha}5^j$, with $j \geq 0$.
So equation~(\ref{n=5main}) becomes
$2^{2\alpha}5^{2\beta}p^{2\gamma} = (x^2 + 2^{2\alpha}5^j)^5 - x^{10}$.
Expanding and removing a factor of $2^{2\alpha}$, we have
\begin{equation}\label{split}
5^{2\beta} p^{2\gamma} = 5^{j+1} x^8 + 2^{2\alpha+1} 5^{2j+1} x^6 +
 2^{4\alpha+1} 5^{3j+1} x^4 + 2^{6\alpha} 5^{4j+1} x^2
+ 2^{8\alpha} 5^{5j}.
\end{equation}
If $\beta = 0$, then $j = 0$ and 
reducing equation~(\ref{split}) modulo~8
yields $1\equiv 5 \pmod 8$, a contradiction.
If $\beta \neq 0$, then $j = 2\beta - 1$ and
reducing equation~(\ref{split}) modulo~3 yields 
$p^{2\gamma}\equiv 2 \pmod 3$, another contradiction.
Hence, $n\neq 5$.

So $n \geq 7$.

Writing equation~(\ref{maineq}) in the form
$\left(x^n\right)^2 + \left(2^{\alpha}5^{\beta}p^{m}\right)^2 = z^5$,
a classical argument 
(see, for example,~\cite[Section 14.2]{CohenII}) yields
nonzero coprime integers, $u$ and $v$, of opposite parity, such that
\begin{equation}
\label{u()} 
x^n  = u(u^4 - 10u^2v^2 + 5v^4) 
\end{equation}
and
\begin{equation}
\label{v()} 
2^\alpha 5^{\beta}p^\gamma = v(v^4 - 10u^2v^2 + 5u^4).  
\end{equation}
Since $x$ is odd, equation~(\ref{u()}) implies that $u$ is odd.
Since $u$ and $v$ are of opposite parity,
$v$ is even.

Further, since $\gcd(u,v) = 1$,
\begin{equation*}
\gcd(v,v^4 - 10u^2v^2 + 5u^4) = \gcd(v,5).
\end{equation*}

If $5\mid v$, then $5\nmid u$ and 
so $v^4 - 10u^2v^2 + 5u^4 \equiv 5 \pmod {25}$.
Thus, since $\gcd(v,v^4 - 10u^2v^2 + 5u^4) = 5$,
by equation~(\ref{v()}), 
$v^4 - 10u^2v^2 + 5u^4 = 5$ or $\pm 5p^\gamma$.  By
Lemma~\ref{5}, the first is impossible.  Therefore, we have
\begin{equation}
\label{5midv}
v = \pm 2^{\alpha} 5^{\beta - 1}
\hspace{1pc} \mbox{and} \hspace{1pc}
v^4 - 10u^2v^2 + 5u^4 = \pm 5p^\gamma,
\end{equation}
with $\gamma \neq 0$.

If $5\nmid v$, then, by equation~(\ref{v()}), $\beta = 0$. Since
$\gcd(v,v^4 - 10u^2v^2 + 5u^4) = 1$, 
$v^4 - 10u^2v^2 + 5u^4 = \pm 1$ or $\pm p^\gamma$.  But
$v^4 - 10u^2v^2 + 5u^4 \equiv 5 \pmod 8$, since $v$ is even.  
Hence, in this case,
\begin{equation}
\label{5nmidv}
v = \pm 2^{\alpha} 
\hspace{1pc} \mbox{and} \hspace{1pc} 
v^4 - 10u^2v^2 + 5u^4 = \pm p^\gamma.
\end{equation}

Combining equations~(\ref{5midv}) and~(\ref{5nmidv}), we have
\begin{equation}\label{v}
v = \pm 2^{\alpha}5^{k} \hspace{1pc}  \mbox{and}
\hspace{1pc}  v^4 - 10u^2v^2 + 5u^4 =\pm  5^{\beta - k}p^\gamma,
\end{equation}
where $k = \beta - 1$ if $5\mid v$, and $k = 0$ otherwise.

Now, if $5\mid u$, then $5\nmid v$ and we have
$\gcd(u,u^4 - 10u^2v^2 + 5v^4) = 5$.  Since
$u^4 - 10u^2v^2 + 5v^4\equiv 5\pmod {25}$ and $n$ is odd,
by equation~(\ref{u()}),
there exist nonzero coprime integers $A_1$, $B_1\in \Z$ such that
\begin{equation}\label{5u^4}
u = 5^{n-1}A_1^n
\hspace{1pc} \mbox{and} \hspace{1pc}
u^4 - 10u^2v^2 + 5v^4 = 5B_1^n.
\end {equation} 
Thus,
$5B_1^n + 20v^4 = \left(u^2 - 5v^2\right)^2$.
Recalling that $5\nmid v$, we can combine this
with equation~(\ref{5nmidv}),
letting $w_1 = \left(u^2 - 5v^2\right)/5 \in \Z$,
to obtain
\begin{equation}\label{5w_1}
B_1^n + 2^{4\alpha + 2} = 5w_1^2.
\end{equation}

Reducing the second part of~(\ref{5u^4}) modulo 8,
we find that $1\equiv 5B_1^n \pmod 8$, and hence
$B_1$ is odd and not equal to $\pm 1$.  
By~\cite[Theorem 1.2]{BeSk04}, there is no integer
solution to the equation
$X^n + 2^{4\alpha + 2}Y^n = 5Z^2$, satisfying  these conditions.
Thus, we have a contradiction.

On the other hand, if $5\nmid u$, then 
$\gcd(u,u^4 - 10u^2v^2 + 5v^4) = 1$.
This together with equation~(\ref{u()}) and
the fact that $n$ is odd implies that 
there exist nonzero coprime integers $A_2$, $B_2\in\Z$ such that
\begin{equation}\label{u^4}
u = A_2^n
\hspace{1pc} \mbox{and} \hspace{1pc} 
u^4 - 10u^2v^2 + 5v^4 = B_2^n.
\end {equation}
Thus, $B_2^n + 20v^4 = \left(u^2 - 5v^2\right)^2$.  
Combining this with equation~(\ref{v})
and letting $w_2 = u^2 - 5v^2$
yields 
\begin{equation}\label{5w_2} 
B_2^n + 2^{4\alpha+2} 5^{4k+1} = w_2^2.
\end{equation}

By equation~(\ref{u^4}), since $u$ is not divisible by 2 or 5,
neither is $B_2$.  So $\gcd(B_2,w_2) = 1$.  
By~\cite[Theorem 1.5]{BeSk04}, there is no integer
solution to the equation
$X^n + 2^{4\alpha+2} 5^{4k+1}Y^n = Z^2$, satisfying these conditions,
with prime $n \geq 11$.  Hence, we have a contradiction unless
$n = 7$.

For the case $n = 7$ (still assuming that $5\nmid u$), 
we first note that $4\alpha + 2 \geq 6$, $w_2 \equiv 1\pmod 4$,
and, since $\gcd(u,v) = 1$, $3\nmid w_2$.  
Evaluating equation~(\ref{5w_2}) modulo~3, recalling that $n = 7$, yields
$B_2^7 + 2 \equiv 1 \pmod 3$, implying that $B_2\equiv 2 \pmod 3$.
On the other hand, evaluating equation~(\ref{5w_2}) modulo~8 yields
$B_2 \equiv B_2^7 \equiv w_2^2 \equiv 1\pmod 8$. 
Thus $B_2 \neq \pm 1$.

Rewriting equation~(\ref{5w_2}) in the form
\[B_2^7 + 2^{r_1}5^{r_2} (2^{s_1}5^{s_2})^7  = w_2^2,\]
with $r_i$, $s_i\in \Z$ such that 
$0\leq r_i < 7$, for $i \in \{1,2\}$,
we can apply Lemma~\ref{BeSk} (with $C = 2^{r_1}5^{r_2}$). 
Hence, there exists a newform of level $N_7$, where
$N_7 \in \{1,2,5,10\}$.
But, as is well-known 
(see, for example,~\cite[Corollary 15.1.2]{CohenII}),
there are no newforms of any of these levels.  
Therefore $n\neq 7$, yielding another contradiction.

Hence the initial supposition is false, and the theorem is proved.

\vskip 10pt
\noindent
Department of Mathematics \\
Bryn Mawr College \\
Bryn Mawr, PA 19010 \\
egoedhart@brynmawr.edu\\
grundman@brynmawr.edu

\end{document}